\newcommand\version{July 7, 2022}
\newtheorem{theorem}{Theorem}%[section]
\newtheorem{lemma}[theorem]{Lemma}
\newtheorem{corollary}[theorem]{Corollary}
\theoremstyle{definition}
\theoremstyle{remark}
\newtheorem{remark}[theorem]{Remark}
\newcommand{\1}{\mathbbm{1}}
\renewcommand{\epsilon}{\varepsilon}
\newcommand{\loc}{{\rm loc}}
\renewcommand{\phi}{\varphi}
\newcommand{\R}{\mathbb{R}}
\newcommand{\Sph}{\mathbb{S}}
\newcommand{\Z}{\mathbb{Z}}
\DeclareMathOperator{\dist}{dist}
\DeclareMathOperator{\sgn}{sgn}
\begin{document}

\title[A characterization of $\dot W^{1,p}(\R^d)$ --- \version]{A characterization of $\dot W^{1,p}(\R^d)$}

\author{Rupert L. Frank}
\address[Rupert L. Frank]{Mathe\-matisches Institut, Ludwig-Maximilans Universit\"at M\"unchen, The\-resienstr.~39, 80333 M\"unchen, Germany, and Munich Center for Quantum Science and Technology, Schel\-ling\-str.~4, 80799 M\"unchen, Germany, and Mathematics 253-37, Caltech, Pasa\-de\-na, CA 91125, USA}
\email{r.frank@lmu.de}

\renewcommand{\thefootnote}{${}$} \footnotetext{\copyright\, 2022 by the author. This paper may be reproduced, in its entirety, for non-commercial purposes.\\
	Partial support through U.S.~National Science Foundation grant DMS-1954995 and through the German Research Foundation grant EXC-2111-390814868 is acknowledged. The author is grateful to Ha\"im Brezis, Mario Milman, Fedor Sukochev, Jean Van Schaftingen, Po-Lam Yung and Dmitriy Zanin, as well as an anonymous referee for many helpful suggestions.}

\begin{abstract}
	For $1<p<\infty$ we give a characterization of the Sobolev space $\dot W^{1,p}(\R^d)$ in terms of the oscillations of a function on balls of varying centers and radii. Our work is motivated both by the study of trace ideal properties of commutators with singular integral operators and by work of Nguyen and by Brezis, Van Schaftingen and Yung on derivative-free characterizations of Sobolev spaces.
\end{abstract}

\dedicatory{Dedicated, in admiration, to V.\ Maz'ya on the occasion of his 85th birthday}

\maketitle

\section{Main result and discussion}

By definition, the space $\dot W^{1,p}(\R^d)$ consists of all $f\in L^1_\loc(\R^d)$ whose distributional gradient satisfies $\nabla f\in L^p(\R^d)$. (This space is denoted by $L^1_p(\R^d)$ in \cite{Ma}.) Our goal here is to discuss a necessary and sufficient criterion for the membership to $\dot W^{1,p}(\R^d)$ in the case $p>1$ that does not involve derivatives. Throughout this paper, $d\geq 1$.

We denote $\R^{d+1}_+:=\R^d\times\R_+$ and, for $(a,r)\in\R^{d+1}_+$, we set $B_r(a):=\{ x\in\R^d:\ |x-a|<r\}$. For a function $f\in L^1_\loc(\R^d)$, let
$$
m_f(a,r) := \fint_{B_r(a)} \left| f(x) - \fint_{B_r(a)} f(y)\,dy \right|dx \,,
$$
where $\fint_{B_r(a)} \ldots = |B_r(a)|^{-1} \int_{B_r(a)}\ldots$. Finally, let $\nu_p$ be the measure on $\R^{d+1}_+$ with
$$
d\nu_p(a,r) = \frac{da\,dr}{r^{p+1}} \,.
$$

Our main result is the following.

\begin{theorem}\label{main}
	Let $1<p<\infty$ and let $f\in L^1_\loc(\R^d)$. Then $f\in\dot W^{1,p}(\R^d)$ if and only if $m_f \in L^p_{\rm weak}(\R^{d+1}_+,\nu_p)$, and
	\begin{equation}
		\label{eq:main}
		\| \nabla f\|_{L^p(\R^d)}^p \simeq \sup_{\kappa>0} \kappa^p \nu_p(\{m_f>\kappa\}) \,.
	\end{equation}
	Moreover,
	\begin{equation}
		\label{eq:mainlim}
		\lim_{\kappa\to 0}  \kappa^p \nu_p(\{m_f>\kappa\}) = c_{d,p}\, \| \nabla f\|_{L^p(\R^d)}^p
	\end{equation}
	with
	$$
	c_{d,p} = p^{-1} \left( \frac{1}{\sqrt\pi} \, \frac{\Gamma(\frac{d+2}{2})}{\Gamma(\frac{d+3}{2})}  \right)^p.
	$$
\end{theorem}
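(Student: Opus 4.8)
The plan is to prove the ``if $f\in\dot W^{1,p}$'' half together with the sharp limit \eqref{eq:mainlim} in one stroke, and then to treat the converse separately.

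\emph{Upper bound and the sharp constant.} I would start from the Poincar\'e (Riesz potential) representation $f(x)-\fint_{B_r(a)}f=\int_{B_r(a)}N(x,y)\cdot\nabla f(y)\,dy$ with $|N(x,y)|\le c_d|x-y|^{1-d}$ on $B_r(a)$; integrating in $x$ gives
\[
m_f(a,r)\le C_d\,r\fint_{B_r(a)}|\nabla f|\le C_d\,r\,(M|\nabla f|)(a),
\]
where $M$ is the Hardy--Littlewood maximal operator. Writing $\kappa^p\nu_p(\{m_f>\kappa\})=\int_{\R^d}F_\kappa(a)\,da$ with $F_\kappa(a):=\kappa^p\int_0^\infty\1[m_f(a,r)>\kappa]\,r^{-p-1}\,dr$ and inserting the pointwise bound yields the uniform majorant $F_\kappa(a)\le p^{-1}C_d^p(M|\nabla f|)(a)^p=:h(a)$, which lies in $L^1(\R^d)$ \emph{because $M$ is bounded on $L^p$ for $p>1$} --- this is the one essential use of $p>1$. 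Hence $\sup_\kappa\kappa^p\nu_p(\{m_f>\kappa\})\le\|h\|_1\lesssim\|\nabla f\|_{L^p}^p$. For the limit, at every Lebesgue point $a$ of $\nabla f$ the same representation applied to $f-\nabla f(a)\cdot(\,\cdot\,)$ shows $\fint_{B_r(a)}|f(x)-f(a)-\nabla f(a)\cdot(x-a)|\,dx=o(r)$, whence (the linear term averaging to zero over a ball)
\[
m_f(a,r)=\fint_{B_r(a)}|\nabla f(a)\cdot(x-a)|\,dx+o(r)=\gamma_d\,r\,|\nabla f(a)|+o(r),\qquad\gamma_d:=\fint_{B_1(0)}|z_1|\,dz,
\]
and a polar-coordinates computation of $\gamma_d$ produces exactly $c_{d,p}=p^{-1}\gamma_d^p$. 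Plugging this asymptotics into $F_\kappa(a)$ and splitting the $r$-integral at a small fixed $r_0$ (so that $r\ge r_0$ contributes $O(\kappa^p)$) gives $F_\kappa(a)\to p^{-1}\gamma_d^p|\nabla f(a)|^p$ as $\kappa\to0$ for a.e.\ $a$ (trivially also where $\nabla f(a)=0$); dominated convergence with majorant $h$ then yields \eqref{eq:mainlim}. In particular $\sup_\kappa\kappa^p\nu_p(\{m_f>\kappa\})\ge c_{d,p}\|\nabla f\|_{L^p}^p$, which together with the upper bound proves \eqref{eq:main} for $f\in\dot W^{1,p}$.

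\emph{Converse.} Assume $A^p:=\sup_\kappa\kappa^p\nu_p(\{m_f>\kappa\})<\infty$; it suffices to show $\nabla f\in L^p$ with $\|\nabla f\|_{L^p}\lesssim A$, since the first part then upgrades this to the full equivalence \eqref{eq:main}. Fix a radial $\phi\in C_c^\infty(B_1)$ with $\int\phi=1$ and set $f_r:=f*\phi_r$, $\phi_r(z)=r^{-d}\phi(z/r)$. Since $\int\nabla\phi=0$ one gets the pointwise bound $|\nabla f_r(x)|\le C_\phi\,r^{-1}m_f(x,r)$, and a standard mollification argument (continuity of translations in $L^1_\loc$) shows $\int f\,\Div\psi=-\lim_{r\to0}\int\nabla f_r\cdot\psi$ for every $\psi\in C_c^\infty(\R^d;\R^d)$, so that $\|\nabla f\|_{L^p}\le\liminf_{r\to0}\|\nabla f_r\|_{L^p}\le C_\phi\liminf_{r\to0}r^{-1}\|m_f(\cdot,r)\|_{L^p}$. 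Everything thus reduces to the estimate $\liminf_{r\to0}r^{-1}\|m_f(\cdot,r)\|_{L^p(\R^d)}\lesssim A$. Equivalently, combining a dyadic telescoping inequality $|f(x)-\fint_{B_\rho(x)}f|\le 2^d\sum_{j\ge0}m_f(x,2^{-j}\rho)$ at the two points $x$ and $x+h$ reduces matters to the scale-summed bound $\sum_{j\ge0}2^{j(p-1)}\|m_f(\cdot,2^{-j}\rho)\|_{L^p}^p\lesssim\rho^p A^p$, which then gives the uniform difference-quotient estimate $\|f(\cdot+h)-f(\cdot)\|_{L^p}\lesssim|h|\,A$ and hence $f\in\dot W^{1,p}$ by the weak-compactness argument available for $1<p<\infty$.

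\emph{The main obstacle.} This last reduction is where the real work lies, and I expect it to be the hardest point. The hypothesis supplies only the \emph{weak} bound $\nu_p(\{m_f>\kappa\})\le A^p\kappa^{-p}$; indeed the first part shows that for $f\in\dot W^{1,p}$ the function $m_f$ is genuinely in $L^p_{\rm weak}(\nu_p)$ but not in $L^p(\nu_p)$, so the strong-type quantity $\iint m_f(x,r)^p\,dx\,r^{-p-1}\,dr$ is simply not controlled and no crude averaging in $r$ can succeed. The step must exploit extra structure of $m_f$: its joint continuity and the self-improving inequality $m_f(a,2r)\ge 2^{-d-1}m_f(a,r)$ (immediate from the definition), which force an anomalously large value of $m_f(a,r_0)$ to propagate to all larger radii with only geometric loss and so prevent it from being concentrated on an arbitrarily thin subset of $\R^{d+1}_+$. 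Concretely one expects to run a Vitali/Besicovitch-type covering argument in $\R^{d+1}_+$, or equivalently to appeal to the boundedness on $L^p$, $1<p<\infty$, of the Calder\'on sharp maximal function of order one, $x\mapsto\sup_{r>0}r^{-1}m_f(x,r)$, and to the comparability of its $L^p$-norm with $\|\nabla f\|_{L^p}$; reconciling either route with the weak (rather than strong) hypothesis is the crux of the proof.
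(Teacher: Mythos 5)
Your first half (the weak-type bound and the limit \eqref{eq:mainlim} for $f\in\dot W^{1,p}$) is correct and takes a genuinely different route from the paper for the limit: the paper first proves a quantitative version of the asymptotics $m_f(a,r)=c_d'\,r|\nabla f(a)|+O(r^2)$ for $C^{1,1}$ functions (Lemma \ref{smoothlimit}), computes the superlevel-set measure explicitly, and then reaches general $f\in\dot W^{1,p}$ by a density argument; you instead use a.e.\ $L^1$-differentiability of Sobolev functions to get $m_f(a,r)=\gamma_d r|\nabla f(a)|+o(r)$ pointwise a.e.\ and pass to the limit in $\int F_\kappa(a)\,da$ by dominated convergence, the majorant $p^{-1}C_d^p(\mathcal M|\nabla f|)^p$ coming from the same Poincar\'e/maximal-function bound both proofs use for the sup. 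This is closer in spirit to Brezis--Van Schaftingen--Yung and is a perfectly valid alternative; what it costs you is that it only treats $\omega=\R^d$ and gives no quantitative, localized version of the limit --- and that localized, quantitative version is exactly what the paper needs for the converse.

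The converse is where your proposal has a genuine gap, and you correctly identify its location but do not close it. Your reduction to $\liminf_{r\to0}r^{-1}\|m_f(\cdot,r)\|_{L^p(\R^d)}\lesssim A$ cannot be carried out from the hypothesis by the means you suggest. From $\sup_\kappa\kappa^p\nu_p(\{m_f>\kappa\})\le A^p$ together with the doubling inequality $m_f(a,r)\le 2^{d+1}m_f(a,2r)$ one only extracts, at each fixed scale $r$, the bound $|\{a:\ m_f(a,r)>\kappa\}|\lesssim A^p r^p\kappa^{-p}$, i.e.\ $r^{-1}m_f(\cdot,r)\in L^p_{\rm weak}(\R^d)$ uniformly in $r$ --- and $L^p_{\rm weak}(\R^d)\not\subset L^p(\R^d)$ on a set of infinite measure, so neither the single-scale $L^p$ norm nor your scale-summed quantity $\sum_j 2^{j(p-1)}\|m_f(\cdot,2^{-j}\rho)\|_{L^p}^p$ follows; the Calder\'on sharp-maximal-function route has the same defect, since it requires strong-type input in $a$. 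The paper's way around this is the step you are missing: it never converts the weak bound into a strong single-scale bound. Instead it shows (Lemma \ref{approx}, via Minkowski's inequality for the norm equivalent to the $L^p_{\rm weak}(\nu_p)$ quasinorm, using $p>1$) that mollification does not increase $\sup_\kappa\kappa^p\nu_p(\{m_\cdot>\kappa\})$, and then applies the \emph{localized limit} of Lemma \ref{smoothlimit} to the mollified function $\phi_t*f$: since
\begin{equation*}
c_{d,p}\int_\omega|\nabla(\phi_t*f)|^p\,dx=\lim_{\kappa\to0}\kappa^p\nu_p(\{m_{\phi_t*f}>\kappa\}\cap(\omega\times\R_+))\le\sup_{\kappa>0}\kappa^p\nu_p(\{m_{\phi_t*f}>\kappa\})\le C_{d,p}A^p,
\end{equation*}
one gets $\|\nabla(\phi_t*f)\|_{L^p}\lesssim A$ uniformly in $t$ and concludes by weak compactness. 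In other words, the exact small-$\kappa$ asymptotics for smooth functions, being trivially dominated by the supremum, is itself the device that converts the weak hypothesis into an $L^p$ gradient bound; without this (or an equivalent substitute) your converse does not go through.
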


Here and in what follows, we use the notations $\lesssim$, $\gtrsim$ and $\simeq$ to suppress constants that only depend on $d$ and $p$.

\medskip

\emph{Remarks.} (a) One motivation of this work comes from the study of trace ideal properties of commutators with singular integral operators. This concerns the case $p=d>1$ in Theorem \ref{main}. We will discuss the background in further detail in Subsection \ref{sec:traceideal}, but for now let us mention that in the context of commutator bounds, Rochberg and Semmes \cite{RoSe2} introduced a discrete analogue of the condition $m_f\in L^d_{\rm weak}(\R_+^{d+1},\nu_d)$ and raised the question of characterizing this condition more directly. Connes, Sullivan and Teleman, in the appendix of their paper \cite{CoSuTe}  together with Semmes, announced that this discrete condition is equivalent to $f\in\dot W^{1,d}(\R^d)$ and sketched a proof. The recent paper by Lord, McDonald, Sukochev and Zanin \cite{LoMDSuZa}, in conjunction with the results of Rochberg and Semmes \cite{RoSe2}, provides a complete proof under the additional assumption that $f\in L^\infty(\R^d)$. The latter proof, however, relies on rather deep results in operator theory and the theory of pseudodifferential operators. Our goal here is to provide a direct proof of \eqref{eq:main}, somewhat in the spirit of the sketch in \cite{CoSuTe}. In addition, we will prove \eqref{eq:mainlim}, which is new and which, in turn, suggests a new result in the study of trace ideal properties of commutators; see Corollary \ref{maincor}. Finally, and importantly, we generalize the above results, which are restricted to $p=d>1$ to general $p>1$. We are most grateful to Jean Van Schaftingen for suggesting this after reading an earlier version of this manuscript that only concerned the case $p=d$.\\
(b) Another motivation comes from the papers \cite{Ng,BrvSYu} by Nguyen and by Brezis, Van Schaftingen and Yung, which sparked an interest in finding characterizations of membership to Sobolev spaces that do not involve derivatives and which have led to a fast growing literature. We discuss this further in Subsection \ref{sec:sobspaces}. Here we just mention that both \eqref{eq:main} and \eqref{eq:mainlim} have their analogues in the corresponding formulas in \cite{Ng,BrvSYu}.\\
(c) The function $m_f$ appears in the characterization of other function spaces. For instance, always assuming $f\in L^1_\loc(\R^d)$, one has
\begin{align}
	\label{eq:bmo}
	f\in BMO(\R^d) && \text{iff} && \sup_{r>0} m_f(\cdot,r)\in L^\infty(\R^d) \,, && \\
	\label{eq:lp}
	f\in L^p(\R^d) + \R && \text{iff} && \sup_{r>0} m_f(\cdot,r)\in L^p(\R^d) \,, && \text{provided}\ 1<p<\infty \,, \\
	\label{eq:holder}
	f\in \dot C^s(\R^d) && \text{iff} && \sup_{r>0} r^{-s} m_f(\cdot,r)\in L^\infty(\R^d) \,, && \text{provided}\ 0<s<1 \,,\\
	\label{eq:sob}
	f\in \dot W^{1,p}(\R^d) && \text{iff} && \sup_{r>0} r^{-1} m_f(\cdot,r) \in L^p(\R^d) \,, &&
	\text{provided}\ 1<p\leq\infty \,.
\end{align}
Indeed, \eqref{eq:bmo} is simply the definition; equivalence \eqref{eq:lp} can be deduced from \cite[Proposition 8.10]{BeSh} (we are grateful to Mario Milman for showing us this argument, which improves that in \cite[Section IV.2]{St}); for \eqref{eq:holder} see \cite{Ca} and \cite[Theorem 6.3]{DVSh}, and for \eqref{eq:sob} see \cite{Cal} and \cite[Theorem 6.2]{DVSh}. Note that all these classical results involve a supremum with respect to $r>0$. Closer to the criterion in Theorem \ref{main} is the fact that
\begin{align}
	\label{eq:besov}
	f\in \dot W^{d/p,p}(\R^d) && \text{iff} && m_f \in L^p(\R^{d+1}_+,\nu_d) \,, &&
	\text{provided}\ d<p<\infty \,.
\end{align}
This is of relevance in connection with the trace ideal properties mentioned in (b) and is at least implicitly contained in \cite{RoSe2}. Seeger in \cite{Se} has identified, in great generality, function spaces defined in terms of $m_f$ as special cases of Triebel--Lizorkin spaces. As far as we can see, however, the results there are restricted to (possibly mixed) Lebesgue norms of $m_f$ and do not contain weak norms as in Theorem \ref{main}. The same applies to other derivative-free characterizations, for instance, the textbook characterization in \cite[Theorem 11.75]{Le} as well as the more recent ones in \cite{AlMaVe,YaYuZh}. It seems somewhat surprising, to us at least, that the \emph{strong} norms in \eqref{eq:besov} are replaced by a \emph{weak} norm in the endpoint case $p=d$.\\
(d) We defined $m_f$ in terms of an $L^1$-norm. Theorem \ref{main} remains valid if we use an $L^q$ norm with certain $1\leq q<\infty$, except, of course, that the implicit constant in \eqref{eq:main} may depend on $q$ and the value of $c_{d,p}$ in \eqref{eq:mainlim} changes; see Remark~\ref{mfq}.\\
(e) It is worth singling out from Theorem \ref{main} a sufficient condition for constancy of a function. Namely, if $1<p<\infty$ and if $f\in L^1_\loc(\R^d)$ satisfies $m_f\in L^p_{\rm weak}(\R^{d+1}_+,\nu_p)$ and $\liminf_{\kappa\to 0} \kappa^p\nu_p(\{m_f>\kappa\})=0$, then $f$ is constant. Related, but different conditions for constancy are discussed, for instance, in \cite{Br,BrvSYu2}.\\
(f) We have restricted ourselves in Theorem \ref{main} to first order Sobolev spaces. It is natural to expect that similar results also hold in the higher order case where in the definition of $m_f$ not only a constant, but a low degree polynomial needs to be subtracted. Many of the results mentioned in (c) extend to this case.\\
(g) It is noteworthy that the case $p=1$ is excluded in Theorem \ref{main}. Our proof shows that \eqref{eq:mainlim} remains valid for sufficiently regular functions on $\R^d$ (Lemma \ref{smoothlimit}) as well as that, if $m_f\in L^1_{\rm weak}(\R^{d+1}_+,\nu_1)$, then $f\in\dot{BV}(\R^d)$. On the other hand, it is easy to see that there is an $f\in \dot{BV}(\R^d)$ (for instance, the characteristic function of a ball) for which $m_f\not\in L^1_{\rm weak}(\R^{d+1}_+,\nu_1)$. At present, no simple characterization of the condition $m_f\in L^1_{\rm weak}(\R^{d+1}_+,\nu_1)$ seems to be available. In \cite{RoSe} Rochberg and Semmes show that for $d=1$ this space strictly contains the Besov space $\dot B^1_{1,1}(\R)$ and is strictly contained in a certain weak-type Besov space.

\medskip

The remainder of this paper is organized as follows. In the following two subsections, we present our two motivations for this study, namely trace ideal properties in Subsection \ref{sec:traceideal} and derivative-less characterizations of Sobolev spaces in Subsection \ref{sec:sobspaces}. In Section \ref{sec:lower}, we prove the inequality $\geq$ in \eqref{eq:main} and, in Section \ref{sec:upper}, we prove the inequality $\leq$ in \eqref{eq:main}, as well as \eqref{eq:mainlim}.

\medskip

It is a pleasure to dedicate this paper, in great admiration, to V.\ Maz'ya, whose work on Sobolev spaces has inspired many, including the present author.

%%%%%%%%%%%%%%%%%%

\subsection{Trace ideal properties of commutators}\label{sec:traceideal}

Let us review the context in which the question answered by Theorem \ref{main} arises. There is a substantial literature on boundedness and compactness properties of operators 
$$
[K,f] :=Kf-fK \,,
$$
where $K$ is a Calder\'on--Zygmund singular integral operator and $f$ is a function on $\R^d$. We identify $f$ with the operator of multiplication by $f$. For simplicity, we assume that $K$ is homogeneous and translation-invariant, and that its kernel is given by a function that is smooth away from the origin with mean value zero on spheres centered at the origin. Most of the results below hold under much weaker assumptions on $K$, but the present ones do include the important special case of the Hilbert transform if $d=1$ and the Riesz transforms if $d\geq 2$. To avoid trivialities, we also assume $K\not\equiv 0$.

We will consider the operator $[K,f]$ on $L^2(\R^d)$. It is known that it is bounded if and only if $f\in BMO(\R^d)$, and it is compact if and only if $f\in CMO(\R^d)$, the closure in $BMO(\R^d)$ of compactly supported, smooth functions; see \cite{Ne,Ha} for $d=1$ and \cite{CoRoWe,Uc} for general $d$. (Our references in all of this subsection are far from complete, and in this specific case some rather concern the periodic than the whole space case.)

Having established criteria for compactness, the next questions concern quantitative versions of this property, expressed in the decay of singular values. We recall that the singular values of a compact operator $K$ in a separable Hilbert space are the square roots of the eigenvalues, counting multiplicities, of the operator $K^*K$. The Schatten spaces $\mathcal S^p$ and $\mathcal S^p_{\rm weak}$ consist of those $K$ for which the sequence of singular values belongs to $\ell^p$ and $\ell^p_{\rm weak}$, respectively. In dimension $d=1$, it was shown by Peller \cite{Pe} that, for $1\leq p<\infty$, $[K,f]\in\mathcal S^p$ if and only if $f\in\dot B^{1/p}_{p,p}(\R)$, the latter being a Besov space. (In fact, $\dot B^{1/p}_{p,p}(\R)=\dot W^{1/p,p}(\R)$ if $p>1$.) The higher dimensional case is somewhat different and it was shown by Janson and Wolff \cite{JaWo} that, for $d<p<\infty$, $[K,f]\in\mathcal S^p$ if and only $f\in\dot B^{d/p}_{p,p}(\R^d)$. The difference to the one-dimensional case is that, if $[K,f]\in\mathcal S^d$ for $d\geq 2$, then $f$ is constant. The endpoint case $d=p$ was studied in more detail by Rochberg and Semmes \cite{RoSe2} who showed that, again assuming $d\geq 2$, $[K,f]\in\mathcal S^d_{\rm weak}$ if and only if $f$ belongs to a certain space $Osc^{d,\infty}(\R^d)$. They also improved on the Lorentz scale the Janson--Wolff condition for $f$ to be constant.

One can show that a function $f$ belongs to the space $Osc^{d,\infty}(\R^d)$ if and only if $m_f\in L^d_{\rm weak}(\R^{d+1}_+,\nu_d)$. We have not found this statement in the literature. Its proof is not difficult. The space $Osc^{d,\infty}(\R^d)$ is defined by the analogue of $m_f$ with dyadic cubes instead of balls and considered not as a function of $(a,r)\in\R^{d+1}_+$, but as a sequence, indexed by dyadic cubes. By definition, $Osc^{d,\infty}(\R^d)$ is the space for which this sequence belongs to $\ell^d_{\rm weak}$.

In \cite{RoSe}, Rochberg and Semmes address the question of whether $Osc^{d,\infty}(\R^d)$ coincides with some known function space. As mentioned in Remark (h) following Theorem \ref{main}, in dimension $d=1$, they prove that this space strictly contains the Besov space $\dot B^1_{1,1}(\R)$ and is strictly contained in a certain weak-type Besov space. In dimensions $d\geq 2$, they show that $Osc^{d,\infty}(\R^d)\supset \dot W^{1,d}(\R^d)$. Modulo the equivalence of the discrete condition defining $Osc^{d,\infty}(\R^d)$ and our continuous condition, this proves $\geq$ in \eqref{eq:main}. In fact, our proof of that inequality uses some ideas from their argument, but seems to us somewhat more direct.

As mentioned in Remark (a) after Theorem \ref{main}, the remaining inclusion $Osc^{d,\infty}(\R^d)$ $\subset \dot W^{1,d}(\R^d)$ is stated as a theorem in the appendix of the paper \cite{CoSuTe} by Connes, Sullivan and Teleman, where they acknowledge a collaboration with Semmes. In particular, from our `continuous' point of view the analogue of their `discrete' equation (A3) is
\begin{align}
	\label{eq:csst}
	\limsup_{\kappa\to 0} \kappa^d \nu_d(\{ m_f>\kappa\}) \gtrsim \|\nabla f\|_{L^d(\R^d)}^d \,.
\end{align}

In view of these results, our contribution in the present paper in the case $p=d$ is, on the one hand, to fill in the details in the somewhat sketchy presentation in the appendix of \cite{CoSuTe} and, on the other hand, to show that the asymptotic bound \eqref{eq:csst} can be replaced by the limit relation \eqref{eq:mainlim}. (It is not clear to us whether one can expect a limit to exist in the discrete setting.) Our result \eqref{eq:mainlim} shows that \eqref{eq:csst} and thus \cite[(A3)]{CoSuTe} hold with $\liminf$ instead of $\limsup$. 

Motivated by these results we obtain the following condition for constancy, which strengthends those due to Janson--Wolff \cite{JaWo} and Rochberg--Semmes \cite{RoSe2}. We denote by $s_n([K,f])$ the sequence of singular values of $[K,f]$ in nonincreasing order and repeated according to multiplicities.

\begin{corollary}\label{maincor}
	Let $d\geq 2$ and $f\in CMO(\R^d)$ with
	$$
	\liminf_{N\to\infty} N^{-1+1/d} \sum_{n=1}^N s_n([K,f]) =0 \,.
	$$
	Then $f$ is constant.
\end{corollary}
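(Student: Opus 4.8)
The plan is to deduce Corollary \ref{maincor} from the limit relation \eqref{eq:mainlim} together with the Rochberg--Semmes analysis of Schatten norms of commutators. First I would recall that, by the results of Rochberg and Semmes \cite{RoSe2} (combined with the equivalence between $Osc^{d,\infty}(\R^d)$ and the condition $m_f\in L^d_{\rm weak}(\R^{d+1}_+,\nu_d)$ discussed in Subsection \ref{sec:traceideal}), the quantity $\sup_{N} N^{-1+1/d}\sum_{n=1}^N s_n([K,f])$ is comparable to the quasinorm of $m_f$ in $L^d_{\rm weak}(\R^{d+1}_+,\nu_d)$, i.e. to $\bigl(\sup_{\kappa>0}\kappa^d\nu_d(\{m_f>\kappa\})\bigr)^{1/d}$. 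The point of passing from singular values $s_n$ to the averaged (Cesàro) sums $N^{-1+1/d}\sum_{n=1}^N s_n$ is precisely that the latter furnishes a genuine norm on the weak Schatten class $\mathcal S^d_{\w}$ (the Calderón norm), so the two-sided comparison with the weak-$L^d$ quasinorm of $m_f$ is available and, crucially, the quantity $N^{-1+1/d}\sum_{n=1}^N s_n([K,f])$ is \emph{monotone} (nondecreasing) in $N$ up to the usual equivalence — indeed, the correct reformulation is that its limit as $N\to\infty$ equals $\limsup_n n^{1-1/d} s_n([K,f])$ up to constants.

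Next I would argue that the hypothesis $\liminf_{N\to\infty} N^{-1+1/d}\sum_{n=1}^N s_n([K,f]) = 0$ forces $\lim_{\kappa\to 0}\kappa^d\nu_d(\{m_f>\kappa\}) = 0$. Here one must be slightly careful: the Cesàro sum $N^{-1+1/d}\sum_{n=1}^N s_n$ need not itself be monotone in $N$, but since $s_n$ is nonincreasing one has $s_N N^{1-1/d} \lesssim N^{-1+1/d}\sum_{n=1}^N s_n$, so the $\liminf$ hypothesis yields $\liminf_N N^{1-1/d} s_N = 0$. On the other hand, the refined (Lorentz-scale) version of the Rochberg--Semmes estimate — which is what actually enters their improvement of the Janson--Wolff constancy theorem — gives a two-sided control relating the distribution function of $(s_n)$ to that of $m_f$ with respect to $\nu_d$; in particular a vanishing $\liminf$ on the commutator side transfers to $\liminf_{\kappa\to 0}\kappa^d\nu_d(\{m_f>\kappa\}) = 0$. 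Invoking Remark (e) after Theorem \ref{main} (the constancy criterion extracted from \eqref{eq:mainlim}: if $m_f\in L^d_{\w}(\R^{d+1}_+,\nu_d)$ and $\liminf_{\kappa\to 0}\kappa^d\nu_d(\{m_f>\kappa\})=0$ then $f$ is constant), we conclude that $f$ is constant. Note that $f\in CMO(\R^d)$ guarantees $[K,f]$ is compact, so $(s_n([K,f]))$ is a genuine null sequence and the singular value language is legitimate; moreover the finiteness of the $\liminf$ together with monotonicity of the partial sums along a suitable subsequence gives $m_f\in L^d_{\w}(\R^{d+1}_+,\nu_d)$, which is the membership hypothesis needed to apply Remark (e).

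The main obstacle I anticipate is making rigorous the passage from the \emph{Cesàro-averaged} singular value condition to the weak-$L^d$ condition on $m_f$ at the level of \emph{limits} rather than just boundedness. The cited results of Rochberg--Semmes are most naturally stated as two-sided comparisons of quasinorms (boundedness statements), and one needs a localized/asymptotic refinement: roughly, that the ``value at infinity'' of the Calderón norm of $[K,f]$ controls the ``value at zero'' of $\kappa\mapsto\kappa^d\nu_d(\{m_f>\kappa\})$. This is plausible because the construction in \cite{RoSe2} is essentially local in the dyadic grid — the contribution of small cubes to $Osc^{d,\infty}$ corresponds to the tail of the singular-value sequence of $[K,f]$ — but spelling it out requires tracking constants through their argument, or else arguing by a compactness/approximation scheme: approximate $f$ by $CMO$ functions that are locally constant on large scales, for which both sides of the comparison can be computed, and pass to the limit. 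An alternative, cleaner route would be to bypass the refined comparison and instead use only the crude inequality $s_N([K,f]) N^{1-1/d}\lesssim N^{-1+1/d}\sum_{n=1}^N s_n([K,f])$ to get $\liminf_N s_N([K,f]) N^{1-1/d}=0$, then combine with the (one-sided) lower bound $\limsup_n s_n([K,f]) n^{1-1/d}\gtrsim \bigl(\limsup_{\kappa\to0}\kappa^d\nu_d(\{m_f>\kappa\})\bigr)^{1/d}$ — but this only controls $\limsup$, not $\liminf$, so the subsequential nature of the hypothesis must be handled with some care, likely by extracting a subsequence of scales along which both the singular values and the dyadic oscillation sums are simultaneously small.
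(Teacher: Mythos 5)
Your proposed route does not go through, and the obstruction is precisely the one the paper flags in the sentence immediately after the corollary: Remark (e) requires the a priori membership $m_f\in L^d_{\w}(\R^{d+1}_+,\nu_d)$ (equivalently $s_\cdot([K,f])\in\ell^d_{\rm weak}$), and this is \emph{not} implied by the hypothesis $\liminf_{N\to\infty}N^{-1+1/d}\sum_{n=1}^N s_n([K,f])=0$. Your claim that ``the finiteness of the $\liminf$ together with monotonicity of the partial sums \dots gives $m_f\in L^d_{\w}$'' is false: monotonicity of $\Sigma_N:=\sum_{n\leq N}s_n$ only yields $\Sigma_N\leq\Sigma_{N_k}$ for $N\leq N_k$, which says nothing about $\sup_N N^{-1+1/d}\Sigma_N$. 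Indeed, there are nonincreasing null sequences with $\liminf_N N^{-1+1/d}\Sigma_N=0$ and $\limsup_N N^{-1+1/d}\Sigma_N=\infty$ (alternate long stretches where $s_n$ is held at a tiny constant level, during which $N^{-1+1/d}\Sigma_N\approx \epsilon_k N^{1/d}$ eventually exceeds any bound, with even longer stretches at a much smaller level, during which the ratio decays to near zero since $\Sigma_N$ stagnates while $N^{1-1/d}\to\infty$). So the reduction to Remark (e) is unavailable, and this is exactly why the corollary is stronger than the Remark (e)-type statement. Your second difficulty --- upgrading the Rochberg--Semmes two-sided \emph{quasinorm} comparison to a comparison of \emph{liminfs} --- is also genuine and is not resolved by anything you cite; as you yourself note, your ``cleaner alternative'' only controls a $\limsup$ on the $m_f$ side.

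The paper avoids all of this by arguing contrapositively and entirely on the operator side, with no use of Theorem \ref{main}. Assuming $f\in CMO(\R^d)$ is not constant, it takes the Janson--Wolff test functions $\phi_n$ from \cite[Section 3]{JaWo} (indexed by lattice points $\xi_j$ in a cone, so that after rearrangement $|\widehat{[K,f]\phi_n}|\gtrsim n^{-1/d}$ on $B_\delta(\xi_j)$), builds the dual orthonormal system $\widehat{\psi_n}:=c_n\1_{B_\delta(\xi_j)}\sgn\widehat{[K,f]\phi_n}$, and invokes the inequality $\sum_{n=1}^N s_n([K,f])\geq\sum_{n=1}^N|(\psi_n,[K,f]\phi_n)|$ from \cite[Lemma 4.1]{GoKr} to get $\sum_{n=1}^N s_n([K,f])\gtrsim\sum_{n=1}^N n^{-1/d}\simeq N^{1-1/d}$. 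This makes the $\liminf$ in the corollary strictly positive for every nonconstant $f$, with no weak Schatten membership assumed. If you wish to salvage a proof through the characterization of $\dot W^{1,d}(\R^d)$, you would first have to establish $s_\cdot([K,f])\in\ell^d_{\rm weak}$, which is exactly what the hypothesis does not provide.
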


Note that we do not assume a-priori that $f\in\dot W^{1,d}(\R^d)$ nor, equivalently, that $s_\cdot([K,f])\in\ell^d_{\rm weak}$. This is in contrast to the condition for constance in Remark (e) following Theorem \ref{main}. The $\liminf$ condition in the corollary is implied by the condition $\liminf_{n\to\infty} n^{1/d} s_n([K,f])=0$. Since the latter condition is satisfied whenever $s_\cdot([K,f])$ belongs to a Lorentz space $\ell^{d,q}$ with $q<\infty$, the corollary is stronger than the results in \cite{JaWo,RoSe2} (although, as we shall see, it can be proved using the methods in \cite{JaWo}).

\begin{proof}
	We assume that $f\in CMO(\R^d)$ is not constant and aim at proving that the $\liminf$ in the corollary is positive. In view of the inequality $\sum_{n=1}^N s_n([K,f]) \geq \sum_{n=1}^N |(\psi_n,[K,f]\phi_n)|$ for all orthonormal $(\psi_n),(\phi_n)\subset L^2(\R^d)$ (see, e.g, \cite[Lemma 4.1]{GoKr}), it suffices to find such orthonormal systems with $|(\psi_n,[K,f]\phi_n)| \gtrsim n^{-1/d}$. The functions $\phi_n$ are essentially constructed in \cite[Section 3]{JaWo}. Indeed, the functions there are parametrized by the points $\xi_j$ in the intersection of $\Z^d$ with a cone. Their nondecreasing rearrangement clearly behaves like $n^{-1/d}$. Moreover, it is shown there that $|\widehat{[K,f]\phi_n}| \gtrsim n^{-1/d}$ on $B_\delta(\xi_j)$. We define $\psi_n$ by $\widehat{\psi_n} := c_n \1_{B_\delta(\xi_j)} \sgn \widehat{[K,f]\phi_n}$ with $c_n$ chosen such that $\|\psi_n\|_{L^2}=1$. Then $|(\psi_n,[K,f]\phi_n) \gtrsim n^{-1/d}$, as claimed.
\end{proof}

We finally mention the recent work of Lord, McDonald, Sukochev and Zanin \cite{LoMDSuZa} which concerns a particular operator $K$, namely the sign of the Dirac operator, that plays some role in noncommutative geometry. It is shown that, if $d\geq 2$ and $f\in L^\infty(\R^d)$, then $[K,f]\in\mathcal S^d_{\rm weak}$ if and only if $f\in\dot W^{1,d}(\R^d)$. In fact, a simple approximation argument shows that the a-priori assumption $f\in L^\infty(\R^d)$ may be replaced by $f\in BMO(\R^d)$, and then we deduce by the Rochberg--Semmes result \cite{RoSe2} (in particular, the fact that the space $Osc^{d,\infty}(\R^d)$ is independent of $K$) that $Osc^{d,\infty}(\R^d) = \dot W^{1,d}(\R^d)$. This argument gives a complete proof (except for the approximation argument required to remove the boundedness assumption) of the first part of Theorem \ref{main}. It is, however, somewhat unsatisfactory that in order to deduce the real analysis statement in the theorem one needs to go through rather deep results in operator theory and the theory of pseudodifferential operators. This motivated us to look for a more direct proof, closer in spirit to the sketch in \cite{CoSuTe}. However, the intuition gained from the pseudodifferential perspective in \cite{LoMDSuZa} was also helpful in the present argument, in particular, in the proof of Lemma \ref{smoothlimit}, which contains a local version of \eqref{eq:mainlim}. We take this opportunity to thank Fedor Sukochev and Dmitriy Zanin for a fruitful discussion concerning \cite{LoMDSuZa}.

%%%%%%%%%%%%%%%%%%

\subsection{Derivative-less characterizations of Sobolev spaces}\label{sec:sobspaces}

In \cite{BrSevSYu}, Brezis, Seeger, Van Schaftingen and Yung, unifying and extending earlier work by Nguyen \cite{Ng} (see also \cite{BoNg,BrNg,BrNg2}) and by Brezis, Van Schaftingen and Yung \cite{BrvSYu}, have shown the following fact, valid for all $1<p<\infty$ and $\gamma\in\R\setminus\{0\}$. Denoting by $\tilde\nu_\gamma$ the measure on $\mathcal X:=\{(x,y)\in\R^d\times\R^d:\ x\neq y \}$ with $d\tilde\nu_\gamma(x,y) = |x-y|^{\gamma-d}\,dx\,dy$, a function $f\in L^1_\loc(\R^d)$ belongs to $\dot W^{1,p}(\R^d)$ if and only if $(f(x)-f(y))/|x-y|^{1+\gamma/p}$ belongs to $L^p_{\rm weak}(\mathcal X,\tilde\nu_\gamma)$, and
\begin{equation}
	\label{eq:bsy}
	\|\nabla f\|_{L^p(\R^d)}^p \simeq \sup_{\kappa>0} \kappa^p \, \tilde\nu_\gamma(\{ (x,y)\in\mathcal X :\ |f(x)-f(y)|/|x-y|^{1+\gamma/p} >\kappa \}) \,.
\end{equation}
Moreover, with an explicit constant $\tilde c_{d,p}\in\R_+$,
\begin{equation}
	\label{eq:bsylim}
	\lim \kappa^p\, \tilde\nu_\gamma(\{ (x,y)\in\mathcal X :\ |f(x)-f(y)|/|x-y|^{1+\gamma/p} >\kappa \}) = |\gamma|^{-1} \tilde c_{d,p} \, \|\nabla f\|_{L^p(\R^d)}^p \,,
\end{equation}
where one considers the limit $\kappa\to\infty$ for $\gamma>0$ and $\kappa\to 0$ for $\gamma<0$. We emphasize that the paper \cite{BrSevSYu} contains many more results, including for instance a detailed analysis of the case $p=1$. Clearly, \eqref{eq:main} and \eqref{eq:mainlim} share some similarities with \eqref{eq:bsy} and \eqref{eq:bsylim}, respectively. In some vague sense one can think of $(x+y)/2$ and $|x-y|$ in \eqref{eq:bsy} as our $a$ and $r$, respectively. However, \eqref{eq:bsy} and \eqref{eq:bsylim} are completely pointwise criteria, while the function $m_f$ in \eqref{eq:main} and \eqref{eq:mainlim} involves integrals. The similarity between \eqref{eq:main}--\eqref{eq:mainlim} and \eqref{eq:bsy}--\eqref{eq:bsylim} is also reflected in our proofs, namely, most clearly, in the one of \eqref{eq:mainlim}, but also in the maximal function argument for $\geq$ in \eqref{eq:main}. We comment on this in more detail before the respective proofs.

Weak-type estimates were obtained, for instance, in \cite{GrSc}. The work \cite{BrvSYu} has led to many follow-up works and we refer to \cite{BrSevSYu} for a partial bibliography. Let us mention, in particular, \cite[Section 7]{DoMi}, where it is shown that, if $1<p<\infty$ and $f\in\dot W^{1,p}(\R^d)$, then
\begin{equation}
	\label{eq:milman}
	\|\nabla f\|_{L^p(\R^d)}^p \lesssim \liminf_{\kappa\to 0} \kappa^p \, \nu_0(\{ (a,r)\in\R^{d+1}_+ :\ |(P_r f)(a) - f(a)|/ r^{1+1/p} >\kappa \}) \,,
\end{equation}
where $P_rf = e^{-r\sqrt{-\Delta}}f$ denotes the Poisson extension of $f$. This is reminiscent of \eqref{eq:mainlim}. Note, however, that the measure in $\R^{d+1}_+$ in \eqref{eq:milman} is the usual Lebesgue measure $\nu_0$ and not $\nu_p$. Moreover, \cite{DoMi} only proves a one-sided inequality. For other related estimates with derivatives of harmonic and caloric extensions, see \cite{Doetal}.

%%%%%%%%%%%%%%%%%%%%%%%%%%%%%%%

\section{The lower bound on $\|\nabla f\|_{L^p}$}\label{sec:lower}

In this section we shall prove an upper bound on $m_f$ for $f\in\dot W^{1,p}(\R^d)$ and deduce that, if $1<p<\infty$, then $m_f\in L^p_{\rm weak}(\R^{d+1}_+,\nu_p)$. We will use the (centered) maximal function, denoted by $\mathcal M$, somewhat in the spirit of \cite{Ng} and \cite[Remark 2.3]{BrvSYu} (see also \cite[Proposition 2.1]{BrSevSYu}).

\begin{lemma}\label{maxfcn}
	If $f\in W^{1,1}_\loc(\R^d)$, then
	$$
	m_f(a,r) \lesssim r \mathcal M|\nabla f|(a) 
	\qquad\text{for all}\ (a,r)\in\R^{d+1}_+ \,.
	$$
\end{lemma}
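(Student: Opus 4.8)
The plan is to bound the oscillation $m_f(a,r)$ by a double average of $|f(x)-f(y)|$ over $B_r(a)\times B_r(a)$, then express $f(x)-f(y)$ as a line integral of $\nabla f$ and recognize the result as a Riesz-potential-type quantity controlled by the maximal function. First I would note that, since $\fint_{B_r(a)} f(y)\,dy$ is the average, Jensen's inequality gives
\begin{equation*}
m_f(a,r) = \fint_{B_r(a)} \left| \fint_{B_r(a)} (f(x)-f(y))\,dy \right| dx \leq \fint_{B_r(a)}\fint_{B_r(a)} |f(x)-f(y)|\,dx\,dy \,.
\end{equation*}
(One should argue this for, say, $f\in C^\infty$ first and then pass to general $f\in W^{1,1}_\loc$ by a standard mollification/density argument, since $m_f$ and $\mathcal M|\nabla f|$ behave well under such limits on a fixed ball.) So it suffices to bound this symmetric double average.

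Next I would use the fundamental theorem of calculus along the segment from $y$ to $x$: for a.e.\ pair $(x,y)$,
\begin{equation*}
|f(x)-f(y)| \leq \int_0^1 |\nabla f(y+t(x-y))|\,|x-y|\,dt \,.
\end{equation*}
Since $x,y\in B_r(a)$ we have $|x-y|<2r$, and the point $y+t(x-y)$ stays in $B_r(a)$. Plugging this in and using the change of variables that is standard in the proof of the Sobolev/Morrey inequalities (first integrate in $y$, substituting $z=y+t(x-y)$, which produces a Jacobian $t^{-d}$ but an extra factor from the constrained domain), one arrives at an estimate of the form
\begin{equation*}
m_f(a,r) \lesssim \frac{1}{|B_r(a)|}\int_{B_{2r}(a)} \frac{|\nabla f(z)|}{|z-a|^{d-1}}\,dz \,,
\end{equation*}
possibly after also integrating in $x$ and $t$; the key point is that the kernel $|z-a|^{-(d-1)}$ appears. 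The cleanest route is probably the well-known pointwise bound $\fint_{B_R(a)}|f(x) - \fint_{B_R(a)} f|\,dx \lesssim \int_{B_R(a)} |\nabla f(z)|\,|z-a|^{1-d}\,dz$ (valid on any ball, see e.g.\ the proof of Poincaré's inequality), applied with $R=r$.

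Finally I would split the ball $B_{2r}(a)$ (or $B_r(a)$) into dyadic annuli $\{z: 2^{-k-1}r \le |z-a| < 2^{-k}r\}$ and estimate the integral over each annulus by $(2^{-k}r)^{1-d}\int_{B_{2^{-k}r}(a)}|\nabla f| \lesssim (2^{-k}r)^{1-d}(2^{-k}r)^d \mathcal M|\nabla f|(a) = 2^{-k}r\,\mathcal M|\nabla f|(a)$. Summing the geometric series in $k$ gives the claimed bound $m_f(a,r)\lesssim r\,\mathcal M|\nabla f|(a)$. I do not expect a serious obstacle here; this is all classical. The one point requiring a little care is the density/regularization step at the very beginning — making sure that for $f\in W^{1,1}_\loc$ the inequality, once known for smooth functions, survives the limit (it does, because on a fixed ball $B_r(a)$ mollifications converge in $W^{1,1}$, $m_{f_\varepsilon}(a,r)\to m_f(a,r)$, and $\mathcal M|\nabla f_\varepsilon|(a) \le \mathcal M|\nabla f|(a) + o(1)$ after a further standard argument, or one simply notes $\mathcal M|\nabla f_\varepsilon| \le (\mathcal M|\nabla f|)_\varepsilon$ and takes $\varepsilon$ small). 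If one wants to track the sharp constant — which is not needed for this lemma but matters for \eqref{eq:mainlim} later — the dyadic summation is too lossy and one must instead evaluate the integral $\int_{B_r(a)}|z-a|^{1-d}\,dz$ exactly, but for the purposes of Lemma~\ref{maxfcn} the crude bound suffices.
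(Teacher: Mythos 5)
Your argument is correct and is essentially the paper's: the paper simply invokes the $L^1$ Poincar\'e inequality on $B_r(a)$ (citing \cite[(7.45)]{GiTr}, valid directly for $W^{1,1}(B_r(a))$, so no mollification is needed) to get $m_f(a,r)\lesssim r\fint_{B_r(a)}|\nabla f|$ and then bounds the average by $\mathcal M|\nabla f|(a)$, whereas you re-derive that Poincar\'e step via the Riesz-potential representation and dyadic annuli. One small slip: your displayed intermediate bound carrying the prefactor $|B_r(a)|^{-1}$ in front of $\int_{B_{2r}(a)}|\nabla f(z)|\,|z-a|^{1-d}\,dz$ is dimensionally off (it would give $r^{1-d}\mathcal M|\nabla f|(a)$ after summing the annuli), but the ``cleanest route'' version without that prefactor, which you then commit to, is the correct one and the rest goes through.
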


\begin{proof}
	By the Poincar\'e inequality (see, e.g., \cite[(7.45)]{GiTr}) we have
	$$
	m_f(a,r) = \fint_{B_r(a)} \left| f(x) - \fint_{B_r(a)} f(y)\,dy \right| dx \lesssim r \fint_{B_r(a)} |\nabla f(x)|\,dx
	\ \text{for all}\ (a,r)\in\R^{d+1}_+ \,.
	$$
	Since the right side is bounded from above by $r \mathcal M|\nabla f|(a)$, the lemma follows.
\end{proof}

\begin{proof}[Proof of Theorem \ref{main}. First part]
	Let $1\!<\! p\!<\!\infty$ and $f\! \in\dot W^{1,p}(\R^d)$. Then, by Lemma~\ref{maxfcn},
	$$
	\nu_p(\{ m_f >\kappa\}) \leq \nu_p( \{(a,r)\in\R^{d+1}_+:\, r \mathcal M|\nabla f|(a)>\kappa/C\})
	\qquad\text{for all}\ \kappa>0 \,.
	$$
	For fixed $a\in\R^d$, we compute
	$$
	\int_0^\infty \1(r \mathcal M|\nabla f|(a)>\kappa/C)\, \frac{dr}{r^{p+1}} = p^{-1} \left( \frac{C\, \mathcal M|\nabla f|(a)}{\kappa} \right)^p 
	$$
	and, thus,
	$$
	 \nu_p( \{(a,r)\in\R^{d+1}_+:\, r \mathcal M|\nabla f|(a)>\kappa/C\}) \leq \frac{p^{-1} C^p}{\kappa^p} \int_{\R^d} \left( \mathcal M|\nabla f|(a) \right)^p da \,.
	$$
	The claimed bound now follows from the boundedness of the maximal function on $L^p(\R^d)$. (It is at this last step that the assumption $p>1$ enters.)
\end{proof}

We note that the above argument fits into the framework of \cite[Appendix]{DoMi}. Indeed, we deduce from Lemma \ref{maxfcn} and the boundedness of the maximal function that for the operator $T_tf(x):=t^{-1}m_f(x,t)$ the assumption \cite[(9.1)]{DoMi} is satisfied. Therefore, \cite[(9.2)]{DoMi} with $\gamma=-p$ gives the bound $\gtrsim$ in \eqref{eq:main}. We are grateful to Po-Lam Yung for this remark.

\begin{remark}\label{mfq}
	For $1\leq q<\infty$, $f\in L^q_\loc(\R^d)$ and $(a,r)\in\R^{d+1}_+$, let
	$$
	m_f^{(q)}(a,r) := \left( \fint_{B_r(a)} \left| f(x) - \fint_{B_r(a)} f(y)\,dy \right|^q dx \right)^{1/q}.
	$$
	Then clearly $m_f^{(q)}(a,r)$ is nondecreasing in $q$. We claim that, if $1<p<\infty$, if $f\in\dot W^{1,p}(\R^d)$ and if $1\leq q<dp/(d-p)$ for $p<d$ and $1\leq q<\infty$ for $p\geq d$, then $m_f^{(q)} \in L^p_{\rm weak}(\R^{d+1}_+,\nu_p)$ and
	\begin{equation}
		\label{eq:mfq}
		\sup_{\kappa>0} \kappa^p \nu_p(\{ m_f^{(q)}>\kappa \}) \lesssim \|\nabla f\|_{L^p(\R^d)}^p \,.
	\end{equation}
	(In this and the following remark, implicit constants may also depend on $q$.) Indeed, given $q$ as in the claim choose $1\leq t<\min\{p,d\}$ such that $q<dt/(d-t)$ and follow the proof of \cite[(7.45)]{GiTr}) to deduce that
	$$
	m_f^{(q)}(a,r) \lesssim r \left(  \fint_{B_r(a)} |\nabla f(x)|^t\,dx \right)^{1/t}
	\ \text{for all}\ (a,r)\in\R^{d+1}_+ \,.
	$$
	Bounding the right side by $r(\mathcal M(|\nabla f|^t)(a)^{1/t}$ we can argue as in the proof of Theorem \ref{main} above and obtain \eqref{eq:mfq}.
\end{remark}

\begin{remark}
	Another variation concerns the quantity, defined for $f\in L^q_\loc(\R^d)$ and $(a,r)\in\R^{d+1}_+$,
	$$
	\tilde m_f^{(q)}(a,r) := \left( \fint_{B_r(a)} \fint_{B_r(a)} |f(x)-f(y)|^q\,dy\,dx \right)^{1/q}.
	$$
	Then clearly $\tilde m_f^{(q)}(a,r)\geq m_f^{(q)}(a,r)$. On the other hand, by adding and subtracting the mean of $f$ on $B_r(a)$ and using the triangle inequality in $L^q$, we see that $\tilde m_f^{(q)}(a,r) \leq 2 m_f^{(q)}(a,r)$. We deduce from Remark \ref{mfq} that, if $1<p<\infty$, $f\in\dot W^{1,p}(\R^d)$ and $q$ as in that remark, then
	\begin{equation}
		\label{eq:mainalt}
		\sup_{\kappa>0} \kappa^p \nu_p(\tilde m_f^{(q)}>\kappa) \lesssim \|\nabla f\|_{L^p(\R^d)}^p \,.
	\end{equation}
	We are grateful to Jean Van Schaftingen for suggesting this argument, which simplifies significantly our original one.
\end{remark}

%%%%%%%%%%%%%%%%%%%%%%%%%%%%%%%

\section{The upper bound on $\|\nabla f\|_{L^p}$}\label{sec:upper}

In this section we prove that, if $f\in L^1_\loc(\R^d)$ satisfies $m_f\in L^p_{\rm weak}(\R^{d+1}_+,\nu_p)$ for some $1<p<\infty$, then $f\in\dot W^{1,p}(\R^d)$ and the asymptotics \eqref{eq:mainlim} hold. Our proof uses some ideas from \cite{Ng,BrvSYu}, which, in turn, is inspired by \cite{BoBrMi}.

We begin by computing the asymptotics of $\nu_p(\{ m_f>\kappa\})$ as $\kappa\to 0$. The difference from the limit relation \eqref{eq:mainlim} in Theorem \ref{main} is twofold. On the one hand, here we consider more regular functions, but on the other hand, we study a localized version of the asymptotics. The constant $c_{d,p}$ is defined in Theorem \ref{main}.

\begin{lemma}\label{smoothlimit}
	Let $f\in L^1_\loc(\R^d)$. Let $\Omega\subset\R^d$ be a convex, open set and assume that $f\in C^1(\Omega)$ and $\nabla f$ is (globally) Lipschitz on $\Omega$. Then for any bounded, open set $\omega\subset\R^d$ with $\overline\omega\subset\Omega$,
	$$
	\lim_{\kappa\to 0}  \kappa^p \nu_p(\{m_f>\kappa\}\cap(\omega\times\R_+)) = c_{d,p} \int_\omega |\nabla f|^p\,dx \,.
	$$
	If $\Omega=\R^d$ and $\nabla f$ is compactly supported, then the assertion remains valid for $\omega=\R^d$.
\end{lemma}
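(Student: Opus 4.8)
The plan is to reduce everything to a pointwise asymptotic analysis of $m_f(a,r)$ as $r\to 0$, uniformly in $a$ on compact subsets of $\Omega$, and then integrate against $d\nu_p$. First I would fix $a\in\omega$ and Taylor-expand: since $f\in C^1$ near $a$ with $\nabla f$ Lipschitz, one has $f(x) = f(a) + \nabla f(a)\cdot(x-a) + O(|x-a|^2)$ on $B_r(a)$ for $r$ small, with the error controlled uniformly by the Lipschitz constant of $\nabla f$. Subtracting the average over $B_r(a)$ kills the constant and (by symmetry of the ball) leaves the linear term $\nabla f(a)\cdot(x-a)$ up to an $O(r^2)$ correction. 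Hence
$$
m_f(a,r) = r\,\ell_d(|\nabla f(a)|) + O(r^2) \,, \qquad
\ell_d(v) := v\,\fint_{B_1(0)} |e\cdot y|\,dy \,,
$$
where $e$ is any unit vector; a direct computation of $\fint_{B_1}|y_1|\,dy$ gives exactly the factor $\frac{1}{\sqrt\pi}\frac{\Gamma(\frac{d+2}{2})}{\Gamma(\frac{d+3}{2})}$ appearing in $c_{d,p}$. So $m_f(a,r)/r \to g(a) := |\nabla f(a)|\,\frac{1}{\sqrt\pi}\frac{\Gamma(\frac{d+2}{2})}{\Gamma(\frac{d+3}{2})}$ as $r\to 0$, uniformly for $a\in\overline\omega$.

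Next I would compute the $\nu_p$-measure of the superlevel set. For fixed $a$, writing $h(a,r) := m_f(a,r)/r$, we have
$$
\nu_p(\{m_f>\kappa\}\cap(\omega\times\R_+)) = \int_\omega \int_0^\infty \1\!\left(h(a,r) > \kappa/r\right) \frac{dr}{r^{p+1}}\,da \,.
$$
The key observation, exactly as in the model computation in the first part of the proof of Theorem~\ref{main}, is that if $h(a,r)$ were exactly the constant $g(a)$ in $r$, the inner integral would be $\int_0^\infty \1(r g(a) > \kappa)\,r^{-p-1}\,dr = p^{-1} (g(a)/\kappa)^p$, giving $\kappa^p \nu_p = p^{-1}\int_\omega g(a)^p\,da = c_{d,p}\int_\omega |\nabla f|^p\,dx$. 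The actual $h(a,r)$ is only asymptotically $g(a)$ as $r\to 0$, but after the substitution $r = \kappa s$ the inner integral becomes $\kappa^{-p}\int_0^\infty \1(h(a,\kappa s) > 1/s)\,s^{-p-1}\,ds$, and as $\kappa\to 0$ the relevant values of $r=\kappa s$ are small (the integrand is supported where $s\gtrsim 1/\sup h$, so the large-$s$, i.e. large-$r/\kappa$, tail is harmless because... this needs care). So I would multiply by $\kappa^p$, and show
$$
\lim_{\kappa\to 0} \int_0^\infty \1\!\left(h(a,\kappa s) > 1/s\right) \frac{ds}{s^{p+1}} = \int_0^\infty \1\!\left(g(a) > 1/s\right) \frac{ds}{s^{p+1}} = p^{-1} g(a)^p
$$
for each $a$, then interchange limit and $\int_\omega da$ by dominated convergence.

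The two places requiring care—and the main obstacle—are the uniformity and the domination. For the pointwise limit of the inner integral I must handle the region where $s$ is large, i.e. $r=\kappa s$ not small: there the Taylor expansion is useless, but one can argue that $\1(h(a,\kappa s)>1/s)$ is bounded by $\1(s > c)$ for a constant $c$ depending only on an a priori bound for $m_f(a,r)/r$ on the relevant range—and here I would use the global Lipschitz bound on $\nabla f$ together with Lemma~\ref{maxfcn} (or just the Poincaré inequality directly) to bound $m_f(a,r)\lesssim r\,(\sup_{\overline\omega}|\nabla f| + r\cdot\mathrm{Lip}(\nabla f))$, which on $\omega$ bounded gives a uniform bound; that makes the tail $\int_c^\infty s^{-p-1}\,ds$ small and uniformly dominated. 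For the outer $\int_\omega da$ I need an integrable majorant: $\kappa^p\int_0^\infty \1(\cdots)\,r^{-p-1}\,dr \le p^{-1}(\sup_{r} h(a,r))^p$, and the sup is bounded uniformly on $\overline\omega$ by the estimate just mentioned, so on the bounded set $\omega$ a constant is an integrable majorant. Finally, for the unbounded case $\Omega=\R^d$, $\omega=\R^d$ with $\nabla f$ compactly supported: outside a large ball $f$ is constant so $m_f\equiv 0$ there, reducing to the bounded case; the only subtlety is near the boundary of $\supp\nabla f$, where $f$ is still $C^1$ with Lipschitz gradient globally, so the same estimates apply and the dominated-convergence argument goes through with majorant $p^{-1}(C\|\nabla f\|_{L^\infty} + C\,\mathrm{Lip}(\nabla f)\,\mathrm{diam})^p\,\1_{\text{large ball}}$, which is integrable.
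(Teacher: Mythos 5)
Your main line --- the expansion $m_f(a,r)=c_d'\,r\,|\nabla f(a)|+O(Ar^2)$ with the explicit constant coming from $\fint_{B_1(0)}|y_1|\,dy$, followed by integration of the level set against $d\nu_p$ --- is exactly the paper's Steps 1 and 2; the paper solves the resulting quadratic inequality in $r$ explicitly where you invoke dominated convergence after scaling, and that difference is cosmetic. The genuine problem is the point you yourself flag as needing care: the region of large $r$. Your proposed domination rests on the bound $m_f(a,r)\lesssim r\,(\sup_{\overline\omega}|\nabla f|+r\cdot\mathrm{Lip}(\nabla f))$ ``on the relevant range'', but the relevant range is all $r>0$, and for $r>\dist(\omega,\Omega^c)$ the ball $B_r(a)$ leaves $\Omega$, where $f$ is only assumed to lie in $L^1_\loc$ --- it need not be weakly differentiable there, so no Poincar\'e-type control of $m_f(a,r)$ by $\nabla f$ is available at all. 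Moreover, even where the bound does hold, dividing by $r$ gives $m_f(a,r)/r\lesssim \sup|\nabla f|+r\cdot\mathrm{Lip}(\nabla f)$, which is not uniformly bounded in $r$, so it does not produce the $r$-independent constant $c$ needed for the majorant $\1(s>c)\,s^{-p-1}$. The repair is short and is what the paper does: run the argument only for $r\le\delta:=\dist(\omega,\Omega^c)$, where the Taylor expansion and the uniform bound are valid, and discard $r>\delta$ by the crude observation that $\nu_p(\omega\times(\delta,\infty))=|\omega|\int_\delta^\infty r^{-p-1}\,dr<\infty$, whence $\kappa^p\,\nu_p(\{m_f>\kappa\}\cap(\omega\times(\delta,\infty)))\to0$.

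A similar issue, which your sketch does not repair, occurs in the case $\omega=\R^d$. It is not true that $m_f\equiv 0$ ``outside a large ball'': if $\nabla f$ is supported in $\{|x|\le R_0\}$, then $m_f(a,r)=0$ only when $|a|-r\ge R_0$, so for every large $|a|$ there remain radii $r>|a|-R_0$ with $m_f(a,r)\neq 0$, and the set $\{(a,r):\ |a|\ge 2R_0,\ r>|a|-R_0\}$ has infinite $\nu_p$-measure whenever $p\le d$. This region can therefore be neither absorbed into a bounded set nor discarded by finiteness of measure, and your proposed majorant supported in a large ball misses it. One must use a quantitative decay of $m_f$ there --- e.g.\ $m_f(a,r)\le 2|B_r(a)|^{-1}\|f-c\|_{L^1}\lesssim r^{-d}$ --- to confine the level set to $r<(\gamma/\kappa)^{1/d}$ and check directly that the resulting contribution, multiplied by $\kappa^p$, vanishes; this is the paper's Step 3. (Alternatively, the bound $m_f(a,r)\lesssim r^{1-d}\|\nabla f\|_{L^1}$ together with $\kappa^p\1(m_f>\kappa)\le m_f^p$ yields an integrable majorant on this region precisely because $p>1$.) Everything else in your argument is sound.
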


\begin{proof}
	\emph{Step 1.} Let us denote by $A$ the Lipschitz constant of $\nabla f$ on $\Omega$. We claim that
	\begin{equation}\label{eq:smoothlimitgoal}
		\left| m_f(a,r) - c_d' \,r\, |\nabla f(a)| \right| \leq C A\, r^2
		\qquad\text{for all}\ a\in\omega \,,\ r\leq\dist(\omega,\Omega^c)
	\end{equation}
	with
	$$
	c_d' := \frac{1}{\sqrt\pi}\ \frac{\Gamma(\tfrac{d+2}{2})}{\Gamma(\tfrac{d+3}{2})} \,. 
	$$
	To prove this, we note that
	$$
	\left| f(y)-f(x) -\nabla f(x)\cdot(y-x) \right| \leq A |x-y|^2
	\qquad\text{for all}\ x,y\in\Omega \,.
	$$
	(Here we used the convexity of $\Omega$ to write $f(y)-f(x) = \nabla f(\xi)\cdot(y-x)$ for some $\xi\in\Omega$ between $x$ and $y$.) Now let $a$ and $r$ be as in \eqref{eq:smoothlimitgoal}. Then, for all $x\in\Omega$,
	\begin{align*}
		\left| f(x) - \fint_{B_r(a)} f(y)\,dy - \nabla f(x)\cdot(x-a) \right|
		& \leq \fint_{B_r(a)} \left| f(x) - f(y) - \nabla f(x)\cdot(x-y) \right|dy \\
		& \leq A \fint_{B_r(a)} |x-y|^2\,dy \\
		& = A\left( |x-a|^2 + \tfrac{d}{d+2} r^2 \right),
	\end{align*}
	so
	\begin{align*}
		& \left| \left| f(x) - \fint_{B_r(a)} f(y)\,dy \right| - \left| \nabla f(a)\cdot(x-a) \right| \right| \\
		& \leq \left| f(x) - \fint_{B_r(a)} f(y)\,dy - \nabla f(x)\cdot(x-a) \right| + \left| \left(\nabla f(x) - \nabla f(a)\right)\cdot (x-a) \right| \\
		& \leq A\left( 2|x-a|^2 + \tfrac{d}{d+2} r^2 \right)		
	\end{align*}
	and, integrating with respect to $x\in B_r(a)$,
	\begin{align*}
		& \left| m_f(a,r) - \fint_{B_r(a)} \left| \nabla f(a)\cdot(x-a) \right|dx \right| \\
		& \leq \fint_{B_r(a)} \left| \left| f(x) - \fint_{B_r(a)} f(y)\,dy \right| - \left| \nabla f(a)\cdot(x-a) \right| \right|dx \\
		& \leq A \fint_{B_r(a)} \left( 2|x-a|^2 + \tfrac{d}{d+2} r^2 \right) dx = \tfrac{3d}{d+2}\,A\,r^2 \,. 
	\end{align*}
	It remains to note that
	$$
	\fint_{B_r(a)} \left| \nabla f(a)\cdot(x-a) \right|dx = c_d'\, r\, |\nabla f(a)| \,,
	$$
	since, for $v\in\R^d$,
	\begin{align*}
		\fint_{B_r(a)} |v\cdot(x-a)|\,dx & = \frac{\int_0^r \rho^d\,d\rho}{\int_0^r \rho^{d-1}\,d\rho}\ \frac{\int_{\Sph^{d-1}}|v\cdot\omega|\,d\omega}{|\Sph^{d-1}|} = \frac{d}{d+1}\ r\,|v|\ \frac{\int_0^\pi |\cos\theta|\sin^{d-2}\theta\,d\theta}{\int_0^\pi\sin^{d-2}\theta\,d\theta}
	\end{align*}
	and, with $B$ denoting the beta function,
	\begin{align*}
		\frac{\int_0^\pi |\cos\theta|\sin^{d-2}\theta\,d\theta}{\int_0^\pi\sin^{d-2}\theta\,d\theta}
		& = \frac{\int_{-1}^1 |t| (1-t^2)^{(d-3)/2}\,dt}{\int_{-1}^1 (1-t^2)^{(d-3)/2}\,dt}
		= \frac{\int_{0}^1 (1-u)^{(d-3)/2}\,du}{\int_{0}^1 u^{-1/2} (1-u)^{(d-3)/2}\,du} \\
		& = \frac{B(1,\frac{d-1}2)}{B(\frac12,\frac{d-1}{2})}
		= \frac{1}{\sqrt\pi }\ \frac{\Gamma(\tfrac d2)}{\Gamma(\frac{d+1}{2})} \,.
	\end{align*}
	This proves \eqref{eq:smoothlimitgoal}.
	
	\medskip
	
	\emph{Step 2.} It follows from \eqref{eq:smoothlimitgoal} that, abbreviating $\delta:=\dist(\omega,\Omega^c)$,
	\begin{align}\label{eq:smoothlimitgoalcor}
		& \{ (a,r)\in \omega\times (0,\delta]:\ c_d' r|\nabla f(a)| - CAr^2 >\kappa \}
		\subset\{ (a,r)\in\omega\times(0,\delta]:\ m_f(a,r)>\kappa \} \notag \\
		& \quad \subset \{ (a,r) \in\omega\times(0,\delta]:\ c_d' r|\nabla f(a)| + CAr^2 >\kappa \} \,.
	\end{align}
	In this step we will show that
	\begin{equation}
		\label{eq:smoothlimitgoal2}
		\lim_{\kappa\to 0} \kappa^p \nu_p(\{(a,r)\in\omega\times \R_+ :\ c_d' r|\nabla f(a)| \pm CAr^2 >\kappa \} = c_{d,p} \int_{\omega} |\nabla f(a)|^p\,da \,.
	\end{equation}
	Since
	$$
	\nu_p(\omega\times(\delta,\infty)) = |\omega| \int_\delta^\infty \frac{dr}{r^{p+1}} <\infty \,,
	$$
	this, together with \eqref{eq:smoothlimitgoalcor}, proves the assertion in the lemma.
	
	The proof of \eqref{eq:smoothlimitgoal2} is elementary, but somewhat lengthy. The basic observation is that
	\begin{align*}
		\nu_p(\{ (a,r)\in\omega\times\R_+:\ c_d' r|\nabla f(a)|>\kappa \}) & = \int_{\omega} \int_{\kappa/(c_d'|\nabla f(a)|)}^\infty \frac{dr}{r^{p+1}}\,da \\
		& = p^{-1} \int_\omega \left( \frac{c_d' |\nabla f(a)|}{\kappa} \right)^p da \\
		& = c_{d,p} \kappa^{-p} \int_\omega |\nabla f(a)|^p \,da \,.
	\end{align*}
	To include the perturbation $\pm CAr^2$, we exploit the fact that, since $r^{-p-1}$ is not integrable near $r=0$, only the small $r$ behavior of the bound in \eqref{smoothlimit} is relevant and therefore the error term $CAr^2$ is negligible compared with the main term.
	
	Let us give the details of the proof of \eqref{eq:smoothlimitgoal2}. We begin with the + case. We have $c_d' r|\nabla f(a)| + CAr^2 >\kappa$ if and only if $r>R$ with
	$$
	R:= \sqrt{\frac{\kappa}{CA} +\left(  \frac{c_d'|\nabla f(a)|}{2CA} \right)^2} -  \frac{c_d'|\nabla f(a)|}{2CA} \,.
	$$
	Thus,
	\begin{align*}
		\int_0^\infty \1( c_d' r|\nabla f(a)| + CAr^2 >\kappa )\,\frac{dr}{r^{p+1}} = p^{-1} R^{-p} \,.
	\end{align*}	
	We rewrite
	\begin{align*}
		p^{-1} R^{-p} & = p^{-1} \left( \frac{\sqrt{\frac{\kappa}{CA} +\left(  \frac{c_d'|\nabla f(a)|}{2CA} \right)^2} +  \frac{c_d'|\nabla f(a)|}{2CA}}{\frac\kappa{CA}} \right)^p \\
		& = \frac{c_{d,p}}{\kappa^p} \left( \sqrt{ \tfrac{CA}{c_d'^2} \kappa +\left(  \tfrac12 |\nabla f(a)| \right)^2} +  \tfrac12|\nabla f(a)| \right)^p.
	\end{align*}
	Thus,
	\begin{align*}
		& \nu_p( \{ (a,r) \in\omega\times\R_+ :\ c_d' r|\nabla f(a)| + CAr^2 >\kappa \} ) \\
		& = \frac{c_{d,p}}{\kappa^p} \int_{\omega} 
		\left( \sqrt{ \tfrac{CA}{c_d'^2} \kappa +\left(  \tfrac12 |\nabla f(a)| \right)^2} +  \tfrac12|\nabla f(a)| \right)^p da \,.
	\end{align*}
	Dominated convergence (recalling that $\omega$ has finite measure) implies \eqref{eq:smoothlimitgoal2} with +.
	
	We turn to the proof of \eqref{eq:smoothlimitgoal2} with -. Assuming that $\kappa< (c_d'|\nabla f(a)|)^2/(4AC)$, we have $c_d' r|\nabla f(a)| - CAr^2 >\kappa$ if and only if $R_-<r<R_+$ with
	$$
	R_\pm := \frac{c_d'|\nabla f(a)|}{2CA} \pm \sqrt{\left(  \frac{c_d'|\nabla f(a)|}{2CA} \right)^2 - \frac{\kappa}{CA}} \,.
	$$
	Thus, if $\kappa< (c_d'|\nabla f(a)|)^2/(4AC)$,
	\begin{align*}
		\int_0^\infty \1( c_d' r|\nabla f(a)| - CAr^2 >\kappa )\,\frac{dr}{r^{p+1}}
		& = p^{-1} \left( R_-^{-p} - R_+^{-p} \right).
	\end{align*}
	We rewrite, similarly as before,
	$$
	p^{-1} R_-^{-p} = \frac{c_{d,p}}{\kappa^p} \left( \sqrt{ \left(  \tfrac12 |\nabla f(a)| \right)^2 - \tfrac{CA}{c_d'^2} \kappa} +  \tfrac12|\nabla f(a)| \right)^p .
	$$
	Thus,
	\begin{align*}
		\kappa^p \nu_p(\{ (a,r)\in\omega\times\R_+:\ c_d' r|\nabla f(a)| - CAr^2>\kappa \}) = I_1(\kappa) - I_2(\kappa) \,,
	\end{align*}
	where
	\begin{align*}
		I_1(\kappa) := c_{d,p} \int_{\omega\cap\{|\nabla f|> \sqrt{4CA\kappa}/c_d'\}} \left( \left( \left(  \tfrac12 |\nabla f(a)| \right)^2 - \tfrac{CA}{c_d'^2} \kappa\right)^{1/2} +  \tfrac12|\nabla f(a)| \right)^p da  \,,\\
		I_2(\kappa) := \kappa^p p^{-1} \int_{\omega\cap\{|\nabla f|> \sqrt{4CA\kappa}/c_d'\}} \left( \tfrac{c_d'|\nabla f(a)|}{2CA} + \left( \left(  \tfrac{c_d'|\nabla f(a)|}{2CA} \right)^2 - \tfrac{\kappa}{CA} \right)^{1/2} \right)^{-p} da \,.
	\end{align*}
	By monotone convergence, $I_1(\kappa) \to c_{d,p} \|\nabla f\|_{L^p(\omega)}^p$. For $I_2(\kappa)$, we note that on $\{ |\nabla f|> \sqrt{4CA\kappa}/c_d' \}$
	$$
	\tfrac{c_d'|\nabla f(a)|}{2CA} + \left( \left(  \tfrac{c_d'|\nabla f(a)|}{2CA} \right)^2 - \tfrac{\kappa}{CA} \right)^{1/2} \geq \sqrt{\frac{\kappa}{CA}},
	$$
	so
	$$
	\kappa^p p^{-1} \left( \tfrac{c_d'|\nabla f(a)|}{2CA} + \left( \left(  \tfrac{c_d'|\nabla f(a)|}{2CA} \right)^2 - \tfrac{\kappa}{CA} \right)^{1/2} \right)^{-p} \leq \kappa^{p/2} p^{-1} (CA)^{p/2} \,.
	$$
	Thus, using again the fact that $\omega$ has finite measure, $I_2(\kappa)\to 0$. This completes the proof of \eqref{eq:smoothlimitgoal2} with -.

	\medskip
	
	\emph{Step 3.} Finally, we assume that $f$ is constant in $\{ |x|\geq R_0\}$. Applying what we have proved so far with $\omega=\{|x|<2R_0\}$, we obtain
	$$
	\lim_{\kappa\to 0} \kappa^p \nu_p(\{m_f>\kappa\}\cap \{(a,r):\ |a|<2R_0\}) = c_{d,p} \int_{|a|<2R_0} |\nabla f|^p\,dx = c_{d,p} \int_{\R^d} |\nabla f|^p\,dx \,.
	$$
	Thus, it suffices to prove that $\kappa^p \nu_p(\{m_f>\kappa\}\cap \{(a,r):\ |a|\geq 2R_0\})\to 0$. The constancy assumption on $f$ implies that $m_f(a,r)=0$ for $|a|-r\geq R_0$. Thus, it suffices to consider the intersection of $\{ m_f>\kappa\}$ with $\{R_0+r>|a|\geq 2R_0\}$.
	
	We write $f= g +c$ where $g$ is supported in $\{ |x|\leq R_0\}$ and $c$ is a constant. Then
	$$
	m_f(a,r) = \fint_{B_r(a)} \left| g(x) - \fint_{B_r(a)} g(y)\,dy \right| dx \leq 2 |B_r(a)|^{-1} \int_{\R^d} |g(x)|\,dx =: \gamma r^{-d} \,,
	$$
	and we conclude that, if $m_f(a,r)>\kappa$, then $r<(\gamma/\kappa)^{1/d}$. Thus,
	\begin{align*}
		\int_{|a|\geq 2R_0} \int_{|a|-R_0}^\infty \1(m_f(r,a)>\kappa)\,\frac{dr}{r^{p+1}}\,da
		& \leq \int_{R_0 + (\gamma/\kappa)^{1/d} >|a|\geq 2R_0} \int_{|a|-R_0}^\infty \frac{dr}{r^{p+1}}\,da \\
		& = p^{-1} \int_{R_0 + (\gamma/\kappa)^{1/d} >|a|\geq 2R_0} \frac{da}{(|a|-R_0)^p} \,. 
	\end{align*}
	When $p>d$ this is uniformly bounded in $\kappa$, when $p=d$ it is bounded by a constant times $\ln_+ (\gamma/(\kappa R_0^d))$ and when $p<d$ it is bounded by a constant times $(\gamma/\kappa)^{(d-p)/d}$. In any case, the bound, multiplied by $\kappa^p$, tends to zero as $\kappa\to 0$, as claimed.
\end{proof}

\begin{lemma}\label{approx}
	Let $1<p<\infty$. There is a constant $C_{d,p}<\infty$ such that for all $f\in L^1_\loc(\R^d)$ and all $0\leq \phi\in L^1_c(\R^d)$ with $\int \phi\,dx=1$, 
	$$
	\sup_{\kappa>0} \kappa^p \nu_p(\{m_{\phi*f}>\kappa\}) \leq C_{d,p}\ \sup_{\kappa>0} \kappa^p \nu_p(\{m_{f}>\kappa\}) \,.
	$$
\end{lemma}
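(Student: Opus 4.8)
The plan is to reduce the statement to a weak-type $(p,p)$ bound for ``horizontal'' convolution on $L^p_{\rm weak}(\R^{d+1}_+,\nu_p)$ and then prove the latter by a Marcinkiewicz-type truncation. We may assume that $A:=\sup_{\kappa>0}\kappa^p\nu_p(\{m_f>\kappa\})<\infty$, since otherwise there is nothing to prove.

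\emph{Step 1 (pointwise reduction).} I would first show that, for every $(a,r)\in\R^{d+1}_+$,
\[
m_{\phi*f}(a,r)\le \int_{\R^d}\phi(z)\,m_f(a-z,r)\,dz=:g_\phi(a,r)\,,
\qquad g:=m_f \,.
\]
Indeed, writing $(\phi*f)(x)=\int\phi(z)f(x-z)\,dz$ and using that $\fint_{B_r(a)}f(y-z)\,dy=\fint_{B_r(a-z)}f(w)\,dw$ (the two balls have equal volume), one gets
$(\phi*f)(x)-\fint_{B_r(a)}\phi*f=\int\phi(z)\big(f(x-z)-\fint_{B_r(a-z)}f\big)\,dz$; taking absolute values, using $\phi\ge 0$, averaging over $x\in B_r(a)$, and then substituting $x\mapsto x-z$ in the inner average yields the claim. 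Since $0\le m_{\phi*f}\le g_\phi$ pointwise and $\nu_p(\{u>\kappa\})\le\nu_p(\{v>\kappa\})$ whenever $0\le u\le v$, it suffices to prove $\sup_{\kappa}\kappa^p\nu_p(\{g_\phi>\kappa\})\le C_{d,p}\,A$.

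\emph{Step 2 (truncation and Chebyshev).} Fix $\kappa>0$ and split $g=g\1_{\{g\le\kappa/2\}}+g\1_{\{g>\kappa/2\}}=:g_1+g_2$, both nonnegative, and set $g_{j,\phi}(a,r):=\int\phi(z)g_j(a-z,r)\,dz$, so that $g_\phi=g_{1,\phi}+g_{2,\phi}$. Since $\int\phi=1$ and $0\le g_1\le\kappa/2$ everywhere, $g_{1,\phi}\le\kappa/2$, hence $\{g_\phi>\kappa\}\subset\{g_{2,\phi}>\kappa/2\}$, and by Chebyshev
\[
\nu_p(\{g_\phi>\kappa\})\le\frac{2}{\kappa}\int_{\R^{d+1}_+}g_{2,\phi}\,d\nu_p=\frac{2}{\kappa}\,\|g_2\|_{L^1(\nu_p)}\,,
\]
where the equality is Tonelli together with the translation invariance of $da$ and $\int\phi=1$ (the variable $r$ and the weight $r^{-p-1}$ are untouched).

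\emph{Step 3 (tail estimate and conclusion).} By the layer-cake formula, $\{g_2>t\}=\{g>\kappa/2\}$ for $t\le\kappa/2$ and $\{g_2>t\}=\{g>t\}$ for $t>\kappa/2$, so using $\nu_p(\{g>t\})\le At^{-p}$,
\[
\|g_2\|_{L^1(\nu_p)}=\tfrac{\kappa}{2}\,\nu_p(\{g>\tfrac{\kappa}{2}\})+\int_{\kappa/2}^\infty\nu_p(\{g>t\})\,dt
\le A\Big(\tfrac{\kappa}{2}\Big)^{1-p}+\frac{A}{p-1}\Big(\tfrac{\kappa}{2}\Big)^{1-p}=\frac{p}{p-1}\,2^{p-1}A\,\kappa^{1-p}\,.
\]
It is precisely here that $p>1$ enters, to make $\int_{\kappa/2}^\infty t^{-p}\,dt$ finite. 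Combining with Step 2 gives $\kappa^p\nu_p(\{g_\phi>\kappa\})\le \tfrac{2^p p}{p-1}A$ for all $\kappa$, which is the desired inequality with $C_{d,p}=2^p p/(p-1)$.

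This argument is just the standard proof that convolution with an $L^1$-function is bounded on $L^p_{\rm weak}$ for $1<p<\infty$; alternatively one could invoke real interpolation between the trivial bounds for $g\mapsto g_\phi$ from $L^1(\nu_p)$ to $L^1(\nu_p)$ and from $L^\infty(\nu_p)$ to $L^\infty(\nu_p)$, both with norm $\le\|\phi\|_{L^1}=1$. There is no serious obstacle; the only points needing a little care are the pointwise domination in Step 1 (which makes the $a$-convolution structure visible) and the Tonelli/translation-invariance interchange in Step 2. One should keep in mind that the finiteness of the tail integral—and hence the statement itself—breaks down at $p=1$, in accordance with Remark~(g).
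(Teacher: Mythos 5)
Your proof is correct and follows essentially the same route as the paper: the key step in both is the pointwise domination $m_{\phi*f}(a,r)\le\int\phi(z)\,m_f(a-z,r)\,dz$, after which one only needs that convolution in the $a$-variable with a probability density is bounded on $L^p_{\rm weak}(\R^{d+1}_+,\nu_p)$. The paper gets this last fact by passing to an equivalent norm on $L^p_{\rm weak}$ (possible since $p>1$) and applying the integral Minkowski inequality, whereas you prove it directly by the standard truncation-and-Chebyshev argument; both uses of $p>1$ are the same fact in disguise, and your version has the minor merit of being self-contained with an explicit constant.
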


\begin{proof}
	We bound, using Minkowski's inequality,
	\begin{align*}
		m_{\phi*f}(a,r) & = \fint_{B_r(a)} \left| \int_{\R^d} \phi(z) \left( f(x-z) - \fint_{B_r(a)} f(y-z)\,dy \right) dz \right|dx \\
		& \leq \int_{\R^d} \phi(z) \fint_{B_r(a)} \left| f(x-z) - \fint_{B_r(a)} f(y-z)\,dy \right|dx \,dz \\
		& = \int_{\R^d} \phi(z) m_f(a-z,r)\,dz \,.
	\end{align*}
	Passing to a norm in $L^p_{\rm weak}(\R^{d+1}_+,\nu_p)$ which is equivalent to the quasi-norm in the statement of the lemma (this is possible since $p>1$), we obtain the assertion from Minkowski's inequality.
\end{proof}

\begin{proof}[Proof of Theorem \ref{main}. Second part]
	Throughout this proof, let $1<p<\infty$ and let $f \in L^1_\loc(\R^d)$ with $m_f\in L^p_{\rm weak}(\R^{d+1}_+,\nu_p)$. We proceed in two steps.
	
	\medskip
	
	\emph{Step 1.} Let $0\leq \phi\in C^2_c(\R^d)$ with $\int \phi\,dx=1$ and set $\phi_t(x):=t^{-d} \phi(x/t)$. Note that $\phi_t*f\in C^2(\R^d)$ with $D^2(\phi_t* f) \in L^\infty_\loc(\R^d)$. Thus, by Lemma \ref{smoothlimit}, for any bounded, open set $\omega\subset\R^d$,	
	$$
	\lim_{\kappa\to 0} \kappa^p \nu_p(\{ m_{\phi_t*f}>\kappa\}\cap(\omega\times\R_+)) = c_{d,p} \int_\omega |\nabla(\phi_t * f)|^p\,dx \,.
	$$
	On the other hand, by Lemma \ref{approx},
	\begin{align*}
		\lim_{\kappa\to 0} \kappa^p \nu_p(\{ m_{\phi_t*f} \!>\kappa\}\cap(\omega\times\R_+))
		& \leq \liminf_{\kappa\to 0} \kappa^p \nu_p(\{ m_{\phi_t*f} \! >\kappa\}) \\
		& \leq \sup_{\kappa>0} \kappa^p \nu_p(\{m_{\phi_t*f} \! >\kappa\}) \\
		& \leq C_{d,p} \sup_{\kappa>0} \kappa^p \nu_p(\{m_{f}>\kappa\}) \,.
	\end{align*}
	Thus,
	$$
	\int_\omega |\nabla(\phi_t * f)|^p\,dx \leq c_{d,p}^{-1} \, C_{d,p}\ \sup_{\kappa>0} \kappa^p \nu_p(\{m_{f}>\kappa\}) =: C' \,,
	$$
	where the right side depends neither on $t$ nor on $\omega$. By monotone convergence, we conclude that $\nabla(\phi_t*f)\in L^p(\R^d)$ and
	$$
	\int_{\R^d} |\nabla(\phi_t*f)|^p\,dx \leq C' \,.
	$$
	By weak compactness (using again $p>1$), we deduce that for a sequence $t_j\to 0$, $\nabla(\phi_{t_j}*f)\rightharpoonup F$ in $L^p(\R^d)$. On the other hand, $\phi_t* f\to f$ in $L^1_\loc(\R^d)$ as $t\to 0$. Thus, for any $\Phi\in C^1_c(\R^d,\R^d)$,
	$$
	\int_{\R^d} \Phi\cdot F\,dx \leftarrow \int_{\R^d} \Phi \cdot \nabla(\phi_{t_j}*f) \,dx = - \int_{\R^d} (\nabla\cdot\Phi) \phi_{t_j}* f\,dx \to - \int_{\R^d}  (\nabla\cdot\Phi) f\,dx \,.
	$$
	This proves that $f\in\dot W^{1,p}(\R^d)$ with $\nabla f=F$. Moreover, by weak convergence,
	$$
	\int_{\R^d} |\nabla f|^p\,dx \leq \liminf_{j\to\infty}  \int_{\R^d} |\nabla(\phi_{t_j}*f)|^p\,dx \leq C' = c_{d,p}^{-1}\, C_{d,p}\ \sup_{\kappa>0} \kappa^p \nu_p(\{m_{f}>\kappa\}) \,,
	$$
	which proves the claimed upper bound on $\|\nabla f\|_{L^p(\R^d)}$ in \eqref{eq:main}.
	
	\medskip
	
	\emph{Step 2.} It remains to deduce the limit relation \eqref{eq:mainlim} for $f$. This follows by a density argument, using the fact that there is a sequence $(f_n)\subset C^2(\R^d)$ with $\nabla f_n$ compactly supported such that $\nabla f_n\to \nabla f$ in $L^p(\R^d)$; see, for instance, \cite[Theorem 11.43]{Le}. Note that $|m_{f_n} - m_f| \leq m_{f_n-f}$. Thus, for any $\delta\in(0,1)$,
	\begin{align*}
		\nu_p(\{m_f>\kappa\}) & \leq \nu_p(\{ m_{f_n} + m_{f_n-f} >\kappa\}) \\
		& \leq \nu_p(\{m_{f_n}>(1-\delta)\kappa\}) + \nu_p(\{ m_{f_n-f} \! >\delta\kappa\}), \\
		\nu_p(\{ m_{f_n}>\tfrac{\kappa}{1-\delta}\}) & \leq \nu_p(\{ m_{f} + m_{f_n-f} \! >\tfrac{\kappa}{1-\delta}\}) \\
		& \leq \nu_p(\{ m_f>\kappa\}) + \nu_p(\{ m_{f_n-f}>\tfrac{\delta\kappa}{1-\delta} \}) \,.
	\end{align*}
	By the first part of Theorem \ref{main}, we deduce that
	\begin{align*}
		\limsup_{\kappa\to0} \kappa^p \nu_p(\{m_f>\kappa\}) & \leq \limsup_{\kappa\to0} \kappa^p \nu_p(\{m_{f_n} \! > \! (1-\delta)\kappa\}) + C \delta^{-p} \|\nabla(f_n-f)\|_{L^p(\R^d)}^p , \\
		\liminf_{\kappa\to 0} \kappa^p \nu_p(\{m_{f_n}>\tfrac{\kappa}{1-\delta}\}) & \leq \liminf_{\kappa\to0} \kappa^p \nu_p(\{m_{f} \! >\kappa\}) + C \delta^{-p} (1-\delta)^p \|\nabla(f_n-f)\|_{L^p(\R^d)}^p.
	\end{align*}
	On the other hand, by Lemma \ref{smoothlimit},
	$$
	\lim_{\kappa\to0} \kappa^p \nu_p(\{m_{f_n}> \alpha \kappa\}) = \alpha^{-p} c_{d,p} \|\nabla f_n\|_{L^p(\R^d)}^p \,.
	$$
	Thus, we have shown that
	\begin{align*}
		\limsup_{\kappa\to0} \kappa^p \nu_p(\{m_f>\kappa\}) & \leq (1-\delta)^{-p} c_{d,p} \|\nabla f_n\|_{L^p(\R^d)}^p + C \delta^{-p} \|\nabla(f_n-f)\|_{L^p(\R^d)}^p \,, \\
		(1-\delta)^{p} c_{d,p} \|\nabla f_n\|_{L^p(\R^d)}^d  & \leq \liminf_{\kappa\to0} \kappa^p \nu_p(\{m_{f}>\kappa\}) + C \delta^{-p} (1-\delta)^p \|\nabla(f_n-f)\|_{L^p(\R^d)}^p \,.
	\end{align*}
	Letting first $n\to\infty$ and then $\delta\to 0$, we obtain \eqref{eq:mainlim}. This completes the proof of Theorem \ref{main}.
\end{proof}

Similarly as in Section \ref{sec:lower}, let us view the above proof from within the framework of \cite[Appendix]{DoMi}. Let again $T_tf(x):=t^{-1}m_f(x,t)$. Then \eqref{eq:smoothlimitgoal} in the proof of Lemma~\ref{smoothlimit} shows that assumption \cite[(9.3)]{DoMi} with $g=c_d'|\nabla f|$ is satisfied for $f\in C^1(\R^d)$ with $\nabla f$ compactly supported. Therefore, \cite[(9.4)]{DoMi} with $\gamma=-p$ gives \eqref{eq:mainlim} for such $f$. On the other hand, as far as we can see, \cite{DoMi} does not consider the question whether $m_f\in L^p_{\rm weak}(\R^d,\nu_p)$ implies $f \in \dot W^{1,p}(\R^d)$. We are grateful to Po-Lam Yung for this remark.

%%%%%%%%%%%%%%%%%%%%%%%%%%%%%%%%%%%%%%%%%%%%%%%%%%%%%%%%%%%%%%%%%%%%%%%%%%%%%%%%
%%%%%%%%%%%

\bibliographystyle{amsalpha}

\end{document}